\title{Remarks on $NIP$  in a model}
\date{\today}
\author{Karim Khanaki\thanks{Partially supported by IPM grants 93030032 and 93030059}\\Arak University of Technology \and   Anand Pillay\thanks{Partially supported by NSF grants DMS-1360702 and DMS 1665035}\\University of Notre Dame}
\newtheorem{Theorem}{Theorem}[section]
\newtheorem{Proposition}[Theorem]{Proposition}
\newtheorem{Definition}[Theorem]{Definition}
\newtheorem{Remark}[Theorem]{Remark}
\newtheorem{Lemma}[Theorem]{Lemma}
\newtheorem{Corollary}[Theorem]{Corollary}
\newtheorem{Fact}[Theorem]{Fact}
\newtheorem{Example}[Theorem]{Example}
\newtheorem{Question}[Theorem]{Question}
\newcommand{\R}{\mathbb R}
\newcommand{\N}{\mathbb N}
\begin{document}
\maketitle

\begin{abstract} We define the notion $\phi(x,y)$ has $NIP$ in $A$, where $A$ is a subset of a model,  and give some equivalences by translating results from \cite{BFT}.
Using additional material from \cite{Simon} we discuss the number of coheirs when $A$ is not necessarily countable. We also revisit the notion ``$\phi(x,y)$ has $NOP$ in a model $M$" from \cite{Pillay-Grothendieck}.

\end{abstract}

\section{Introduction}
This paper is a kind of companion-piece to \cite{Pillay-Grothendieck}, although here  we are mainly concerned  with  direct translations of theorems from \cite{BFT}  into the (classical) model theory context.  The main results  are Corollary 2.2 on equivalences of ``$\phi(x,y)$ has $NIP$ in $A$",  Proposition 2.3 on number of coheirs when $A$ is not necessarily  countable, and Lemma 2.6  showing that the definition of ``$\phi(x,y)$ has not the order property in $M$" has an equivalent formulation compatible with the $NIP$ definitions in the current paper.

Our model theory notation is standard, and texts such as \cite{Poizat}, \cite{Pillay} will be sufficient background for the model theory part of the paper.
$IP$ stands for the independence property, and $NIP$ for not the independence property.

\begin{Definition} Let $T$ be a complete $L$-theory, $\phi(x,y)$ an $L$-formula, and $M$ a model of $T$.
\newline
(i) A  set $\{a_{\alpha}:\alpha < \kappa\}$ of $l(x)$-tuples from $M$ is said to be an $IP$-witness for $\phi(x,y)$ if  for all finite disjoint subsets $I, J$ of $\kappa$, $M\models \exists y(\bigwedge_{\alpha\in I} \phi(a_{\alpha},y) \wedge \bigwedge_{\beta\in J}\neg\phi(a_{\beta},y))$.
\newline
(ii) Let $A$ be a set of $l(x)$-tuples from $M$. Then $\phi(x,y)$ has $IP$ in $A$ if there is a countably infinite sequence $(a_{i}:i<\omega)$ of elements of $A$ which is an $IP$-witness for $\phi(x,y)$.
\newline
(iii) Let $A$ be a  set of $l(x)$-tuples in $M$. We say that $\phi(x,y)$ has $NIP$ in $A$ if it does not have $IP$ in $A$.
\newline
(iv) $\phi(x,y)$ has $NIP$ in $M$ if it has $NIP$ in the set of $l(x)$-tuples from $M$.
\end{Definition}

\begin{Remark} (i) $\phi$ has $NIP$ for the theory $T$ iff it has $NIP$ in every model $M$ of $T$ iff it has $NIP$ in some model $M$ of $T$ in which all types over the empty set in countably many variables are realised.
\newline
(ii)  If $\phi(x,y)$ has $IP$ in some model $M$ of $T$, then there arbitrarily long $IP$-witnesses for $\phi$ (of course in different models).
\newline
(iii) Let $(a_{\alpha}:\alpha < \kappa)$ be a collection of $l(x)$-tuples from $M$ and let $M^{*}$ be a saturated elementary extension of $M$ (i.e. $|M|^{+}$-saturated).  Then  $(a_{\alpha}:\alpha < \kappa)$ is an $IP$-witness for $\phi(x,y)$ iff there are $b_{I}$ in $M^{*}$ for each $I\subseteq \kappa$ such that $M^{*}\models \phi(a_{\alpha},b_{I})$ iff $\alpha\in I$, for all $\alpha<\kappa$, $I\subseteq \kappa$.
\end{Remark}

Given the $L$-formula $\phi(x,y)$, $\phi^{opp}(y,x)$ is the formula $\phi(x,y)$.

\begin{Example} Let $M$ be the structure with sorts $P = {\omega}$, $Q =$ finite subsets of $\omega$, and $R\subset P\times Q$ the membership relation.
Then the formula $R(x,y)$ has $IP$ in $M$ whereas $R^{opp}(y,x)$ has $NIP$ in $M$.
\end{Example}

In \cite{Pillay-Grothendieck},  the notion ``$\phi(x,y)$ has the order property (OP) in a model $M$" appeared, and we will discuss in 2.3 the compatibilities with Definition 1.1.

\vspace{5mm} \noindent It has been known for a long time that the
$NIP$ notion arose independently in model theory (\cite{Shelah})
and learning theory \cite{VC}.  More recently it was noticed by
several people (for example \cite{Chernikov-Simon},
\cite{Ibarlucia} and \cite{Khanaki}) that the notion also
appeared independently  in the context of function spaces
\cite{BFT}. The latter paper \cite{BFT} was at a fairly high
level of generality due to trying to find a common context for
functions on compact (Hausdorff) spaces and functions on Polish
spaces. The compact space case suffices for our purposes.

The current paper is partly expository, and has thematic overlap
with \cite{Simon}, \cite{Khanaki}, \cite{Ibarlucia}.  The notion
``$NIP$ of $\phi(x,y)$  in a model" is mentioned in
\cite{Khanaki}.  But \cite{Simon} and \cite{Ibarlucia} deal with
``$NIP$ of  $\phi(x,y)$  in a theory"  (in the continuous
framework in the latter paper). In the current paper we work in
classical ($\{0,1\}$-valued) model theory, although results such
as Corollary 2.2 are valid in the continuous logic framework. We
should also mention that in the continuous framework the formula
$|| x+y||$ in the language of Banach spaces is $NIP$ in
Tsirelson's space $M_{\cal T}$.  This was shown in
\cite{Khanaki-Banach}. It seems to be open whether the same
formula has $NIP$ in the theory of the Tsirelson space.

\subsection{Topology}
The data consists of a compact space $X$ and a collection $A$ of real valued functions on $X$.  The topology will be induced by the pointwise convergence topology on the ambient space $\R^{X}$. Let $B$ be some collection of real valued functions on $X$, containing $A$.

$A$ is said to be {\em relatively compact}  in $B$ if the closure $cl_{B}(A)$ of $A$ in $B$ is compact. Note that in this case $cl_{B}(A)$ is closed (and compact) in the space $\R^{X}$,  so in particular it implies that the closure of $A$ in $\R^{X}$ is contained in $B$.

$A$ is said to be {\em relatively countably compact} in $B$ if any sequence
\newline
$(f_{i}:i<\omega)$ has a cluster point (or accumulation point) in $B$.  Remember such a cluster point is by definition a point $f\in B$ such that any open neigbourhood of $f$ in $B$ contains infinitely many $f_{i}$.

A basic fact is that if $A$ is relatively compact in $B$ then it is relatively countably compact in $B$.

$A$ is said to be {\em relatively sequentially compact}  in $\R^{X}$, if any sequence from $A$ has a convergent (in $\R^{X}$) subsequence.

A subspace $B$ of $R^{X}$ is said to be {\em angelic} if (i) every relatively countably compact subset of $B$ is relatively compact (in $B$), and (ii) if $B_{0}\subset B$ is relatively compact
in $B$ then its closure is the set of limits of sequences from $B_{0}$.  (See \cite{BFT}.)

In the applications, typically $A$ will be a subset of $C(X)$,  the set of continuous functions on $X$.

\subsection{Model theory translation}

We fix an $L$-formula $\phi(x,y)$, and $L$-structure $M$ and a set $A$ of $l(x)$-tuples from $M$. We also let $M^{*}$ be an $|M|^{+}$-saturated elementary extension of $M$.
Let $X$ be the space $S_{\phi^{opp}}(A)$ of complete $\phi^{opp}$-types on $A$, namely  the (Stone) space of ultrafilters on the Boolean algebra generated by formulas $\phi(a,y)$ (or equivalently definable sets $\phi(a,y)(M)$) for $a\in A$.

Note that each formula $\phi(a,y)$ for $a\in A$ defines a  function on $X$, which takes $q\in X$ to $1$ if $\phi(a,y)\in q$ and to $0$ if $\phi(a,y)\notin q$.  We often write this value as $\phi(a,q)$, which note is also the (truth)value of $\phi(a,b)$ in $M^{*}$ for some/any $b$ realizing $q$.   We sometimes identify $A$ with this collection of (continuous) functions on $X$.

$S_{\phi}(M^{*})$ denotes the space of complete $\phi(x,y)$ types over $M^{*}$, namely ultrafilters on the Boolean algebra of formulas (definable sets) generated by $\phi(x,b)$ for $b\in M^{*}$.  Again given $p(x)\in S_{\phi}(M^{*})$ and $b\in M^{*}$ we can write $\phi(p,b)$ for the value of $\phi(x,b)$ at $p$.
$p(x)\in S_{\phi}(M^{*})$ is by definition {\em finitely satisfiable in $A$} if whenever $\chi(x)\in p$ then there is $a\in A$ such that $M^{*}\models \chi(a)$.
Note  that if $p(x)\in S_{\phi}(M^{*})$ is finitely satisfiable in $A$, and $b\in M^{*}$ then the value of $\phi(p,b)$ depends only on $tp_{\phi^{opp}}(b/A)$, so we may write this value as $\phi(p,q)$ where $q= tp_{\phi^{opp}}(b/A)\in X$.

The only additional thing we need to remark  on is the following (Remark 2.1 of \cite{Pillay-Grothendieck}:

\begin{Fact} Let $f\in 2^{X}$. Then $f\in cl(A)$ iff there is $p(x)\in S_{\phi}(M^{*})$, such that $f(q) = \phi(p,q)$ for all $q\in X$.
\end{Fact}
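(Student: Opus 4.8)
The plan is to set up a dictionary between the pointwise topology on $2^{X}$ and finite satisfiability of $\phi$-types over $M^{*}$, and then verify each direction of the equivalence. Throughout, for $b \in M^{*}$ write $q_{b} = tp_{\phi^{opp}}(b/A) \in X$, and for $\epsilon \in \{0,1\}$ let $\phi(x,b)^{\epsilon}$ denote $\phi(x,b)$ if $\epsilon = 1$ and $\neg\phi(x,b)$ if $\epsilon = 0$. Two preliminary observations drive everything. First, since $A \subseteq M$ and $M^{*}$ is $|M|^{+}$-saturated, every $q \in X$ is realized in $M^{*}$, i.e. $q = q_{b}$ for some $b \in M^{*}$; so the map $b \mapsto q_{b}$ is onto $X$. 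Second, basic open sets of $2^{X}$ in the product (pointwise) topology are specified by finitely many points $q_{1}, \dots, q_{n} \in X$ together with prescribed values, so (since all functions in sight are $\{0,1\}$-valued) $f \in cl(A)$ holds precisely when for every finite tuple $q_{1}, \dots, q_{n} \in X$ there is $a \in A$ with $\phi(a, q_{i}) = f(q_{i})$ for $i = 1, \dots, n$.

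For the direction assuming $f \in cl(A)$, I would define $p = \{\phi(x,b)^{f(q_{b})} : b \in M^{*}\}$. This is a complete $\phi$-type over $M^{*}$ once we know it is consistent, and I would obtain consistency from finite satisfiability in $A$. Indeed, given finitely many $b_{1}, \dots, b_{n}$, apply the pointwise-closure criterion to the points $q_{b_{1}}, \dots, q_{b_{n}}$ to produce $a \in A$ with $\phi(a, q_{b_{i}}) = f(q_{b_{i}})$; since $\phi(a, q_{b_{i}})$ is exactly the truth value of $\phi(a, b_{i})$ in $M^{*}$, this $a$ realizes the relevant finite conjunction, so $p$ is finitely satisfiable in $A$ and hence a genuine type. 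Finally, to see that $f(q) = \phi(p,q)$ for all $q$: by surjectivity of $b \mapsto q_{b}$ write $q = q_{b}$, and then $\phi(p,q) = \phi(p,b) = f(q_{b}) = f(q)$ by the definition of $p$.

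For the converse, assuming $p \in S_{\phi}(M^{*})$ is finitely satisfiable in $A$, set $f(q) = \phi(p,q)$; this is well-defined by the remark preceding the Fact, since for such $p$ the value $\phi(p,b)$ depends only on $q_{b}$. To show $f \in cl(A)$ I would verify the pointwise-closure criterion: given $q_{1}, \dots, q_{n} \in X$, choose $b_{i} \in M^{*}$ with $q_{i} = q_{b_{i}}$, so that $f(q_{i}) = \phi(p, b_{i})$ and hence $\phi(x, b_{i})^{f(q_{i})} \in p$; finite satisfiability of $p$ in $A$ then yields $a \in A$ realizing $\bigwedge_{i} \phi(x, b_{i})^{f(q_{i})}$, which means $\phi(a, q_{i}) = f(q_{i})$ for all $i$, placing $a$ in the prescribed basic neighbourhood of $f$.

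I expect the only point requiring genuine care to be the surjectivity of $b \mapsto q_{b}$ onto $X$, which is where the saturation hypothesis on $M^{*}$ enters and which is needed in both directions to pass between points of $X$ and parameters in $M^{*}$; the remainder is a routine unwinding of the definitions of the pointwise topology and of finite satisfiability, together with the identification of $\phi(a,q_{b})$ with the truth value of $\phi(a,b)$.
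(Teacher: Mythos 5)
Your proof is correct and is essentially the argument behind the cited Remark 2.1 of \cite{Pillay-Grothendieck}; the paper offers no proof of Fact 1.4 beyond that citation, and your unwinding of the product topology on $2^{X}$, finite satisfiability, and $|M|^{+}$-saturation (to realize each $q\in X$ by some $b\in M^{*}$) is exactly the standard one. Your reading of the right-hand side as requiring $p$ to be finitely satisfiable in $A$ --- so that $\phi(p,q)$ is well defined, per the paragraph preceding the Fact --- is the intended one, and since your forward direction produces a $p$ with that property, both implications are complete.
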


\section{Results}

\subsection{Theorems from  \cite{BFT} and the model theoretic translation}

The following can be extracted from \cite{BFT} and we give the explanation below. It is convenient to include compactness of the closure (in $\R^{X}$) of the given set $A$ of functions, in the assumptions.
\begin{Proposition}  Let $X$ be a compact Hausdorff space, and $A\subseteq C(X)$, the space of continuous real valued functions on $X$.
Assume $A$ to be uniformly bounded.
\newline
(a) The following are equivalent:
\newline
(i) Let $\alpha < \beta$, and $(f_{i}: i\in \N)$ a sequence from $A$. Then there is $I\subseteq \N$ such that $\{x\in X: f_{n}(x)\leq \alpha$ for all $n\in I$ and $f_{n}(x)\geq\alpha$ for all $n\notin I\}$ is empty.
\newline
(ii) $A$ is relatively sequentially compact in $\R^{X}$.

\vspace{2mm}
\noindent
(b) Suppose moreover that $A$ is countable, then each of (iii), (iv), (v) below is also equivalent to (i), (ii) above:
\newline
(iii) any $f\in \R^{X}$ in the closure of $A$ is a Borel (measurable) function,
\newline
(iv) each $f\in \R^{X}$ in the closure of $A$ is a pointwise limit of a sequence $(f_{i}:i\in\N)$ from $A$,
\newline
(v) $cl(A)$ has cardinality $< 2^{2^{\omega}}$.
\end{Proposition}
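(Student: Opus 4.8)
The plan is to read part (a) as a restatement of Rosenthal's $\ell^{1}$-theorem and part (b) as the Bourgain--Fremlin--Talagrand theorem of \cite{BFT}, so that the work reduces to matching condition (i) to the combinatorial hypotheses of those results. I first reformulate (i). For a fixed sequence $(f_{i})$ from $A$ and fixed reals $\alpha<\beta$ (reading the second inequality in (i) as $f_{n}(x)\ge\beta$, the gap being essential), call the sequence \emph{$(\alpha,\beta)$-independent} if for every $I\subseteq\N$ the set $\{x: f_{n}(x)\le\alpha\ (n\in I),\ f_{n}(x)\ge\beta\ (n\notin I)\}$ is nonempty. Then (i) asserts exactly that no sequence from $A$ is $(\alpha,\beta)$-independent for any $\alpha<\beta$. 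Since the sets $\{f_{n}\le\alpha\}$ and $\{f_{n}\ge\beta\}$ are closed in the compact space $X$, the finite intersection property shows that $(\alpha,\beta)$-independence is equivalent to its finitary version (all finite disjoint $I,J$ give a nonempty finite intersection), and this finitary form is precisely the combinatorial condition in Rosenthal's theorem.

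With this dictionary in hand, (a) splits into an easy and a hard direction. For $\lnot$(i)$\Rightarrow\lnot$(ii): if $(f_{n})$ is $(\alpha,\beta)$-independent, then applying independence to the partition of the index set of any subsequence into evens and odds yields a point $x$ at which that subsequence oscillates between values $\ge\beta$ and $\le\alpha$, so no subsequence converges pointwise at $x$, and $A$ is not relatively sequentially compact. For (i)$\Rightarrow$(ii) I would invoke Rosenthal's dichotomy (here uniform boundedness is used): a bounded sequence with no pointwise convergent subsequence has a subsequence that is finitarily $(\alpha,\beta)$-independent for suitable $\alpha<\beta$, and by the compactness upgrade above this contradicts (i). This settles (a) modulo citing Rosenthal.

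For (b), with $A$ countable I invoke the BFT theorem. Assume (i) (equivalently (ii)). Uniform boundedness already makes $cl(A)$ compact in $\R^{X}$ (a closed subset of a product of compact intervals), and after replacing $X$ by the image of the continuous evaluation map $X\to\R^{A}$ we may take $X$ second countable; the genuine force of (i) is that no sequence from $A$ is $\ell^{1}$-like, and BFT then gives $cl(A)\subseteq B_{1}(X)$ with this space angelic. Angelicity yields that every $f\in cl(A)$ is a pointwise limit of a sequence from $A$, which is (iv); Baire class $1$ functions are Borel, giving (iii); and since each such limit is coded by an element of $A^{\N}$, we get $|cl(A)|\le 2^{\omega}<2^{2^{\omega}}$, which is (v). For the converse I argue contrapositively from $\lnot$(i): an $(\alpha,\beta)$-independent sequence $(f_{n})$ has pointwise closure (inside $cl(A)$) containing a homeomorphic copy of the Stone--\v{C}ech compactification $\beta\N$, realized by the bounded limits $\lim_{\mathcal U}f_{n}$ along ultrafilters $\mathcal U$ on $\N$; this copy has cardinality $2^{2^{\omega}}$, forcing $\lnot$(v), and it contains functions that are neither Baire class $1$ (so not pointwise limits of continuous functions, giving $\lnot$(iv)) nor Borel (giving $\lnot$(iii)). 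Together with (i)$\Leftrightarrow$(ii) this establishes all the equivalences in (b).

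The main obstacle is the implication (i)$\Rightarrow$(ii), whose real content is Rosenthal's theorem: passing from ``no $(\alpha,\beta)$-independent subsequence'' to ``a pointwise convergent subsequence exists'' rests on the Ramsey-type tree arguments of \cite{BFT} and is not elementary. A secondary delicate point is the bookkeeping needed to move between the two-threshold shattering condition of (i) and the honestly real-valued $\ell^{1}$-independence of Rosenthal's theorem --- one must check that reducing to rational thresholds $\alpha<\beta$ and passing to subsequences costs nothing --- and, in (b), that the reduction to a second countable $X$ (legitimate precisely because $A$ is countable) is what pins the two sides of the cardinality dichotomy at $2^{\omega}$ and $2^{2^{\omega}}$.
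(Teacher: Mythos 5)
Your reading matches the paper's: Proposition 2.1 is not proved from scratch but is obtained by matching (a) with the equivalence (vi)$\Leftrightarrow$(ii) of Theorem 2F of \cite{BFT} (the Rosenthal-type $\ell^1$ dichotomy for arbitrary compact $X$) and extracting (b) from Theorem 4D of \cite{BFT}, and your proposal does essentially the same. You are right that the second inequality in (i) must be read as $f_n(x)\geq\beta$ (the printed $\alpha$ is a typo), and your finite-intersection-property upgrade of finitary to infinitary independence and the even/odd oscillation argument for $\lnot$(i)$\Rightarrow\lnot$(ii) are correct. The genuine divergence is in how the cycle in (b) is closed. The paper runs (i)$=$4D(viii)$\Leftrightarrow$4D(vii)$=$(v), uses 4D(vi) (angelicity of $cl(A)$) to get (iv), gets (iii) from (iv) since $A$ is countable, and then observes that (iii) says $A$ is relatively (countably) compact in the Borel functions, which is 4D(v); Theorem 4D then closes the loop, so no non-Borel cluster point is ever exhibited. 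You instead prove the contrapositives directly from the copy of $\beta\N$ inside $cl(A)$. For $\lnot$(v) your argument is complete (the $2^{2^\omega}$ ultrafilter limits are pairwise separated at the witnessing points), and $\lnot$(iv) follows already by counting: a countable $A$ admits only $2^\omega$ sequences, hence at most $2^\omega$ sequential limits. But your claim that the $\beta\N$ copy contains a \emph{non-Borel} function, which is what $\lnot$(iii) requires, is not a cardinality consequence (a nonmetrizable $X$ can carry far more than $2^{2^\omega}$ Borel, indeed continuous, functions) and is itself a nontrivial ingredient of Theorem 4D: one needs the Cantor-scheme construction of a compact $D\subseteq X$ with a continuous surjection $\sigma\colon D\to 2^{\N}$ along which a nonprincipal ultrafilter pulls back to a non-Borel set. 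Either supply that construction or, as the paper does, route (iii)$\Rightarrow$(i) through the cited equivalence of 4D(v) with 4D(viii). Your preliminary replacement of $X$ by its image in $\R^{A}$ is legitimate but unnecessary for the citation, since Theorem 4D already allows general $X$ with countable $A$.
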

\noindent
{\em Explanation.}  (a)  is precisely the equivalence of (vi) and (ii) in Theorem 2F of \cite{BFT}. Note that this Theorem 2F  has weaker general assumptions: $X$ is an aribtrary Hausdorff space  and $A\subseteq C(X)$ is pointwise bounded.
\newline
(b) can be extracted from Theorem 4D of \cite{BFT}, as we describe now.  The  assumptions of Theorem 4D are again much more general, but it states an equivalence between eight conditions.  Theorem 4D (viii) is our (a)(i). Theorem 4D(vii) is our (b)(v).  Theorem 4D (vi) says that the closure $\bar A$ of $A$ in $\R^{X}$ is {\em angelic}.  (See section 1.1.)  As $\bar A$ is compact, we obtain (b)(iv).  As $A$ is countable, this immediately implies (b)(iii) that ${\bar A}$ consists of Borel functions.  But then $A$ is relatively compact in $B(X)$ so also relatively countably compact in $B(X)$, which is precisely Theorem 4D(v) of \cite{BFT}.

\vspace{5mm}
\noindent
Let $\phi(x,y)$, $M$, $A$, $M^{*}, X$ be as in Section 1.2. Then translating conditions (i) - (v) in Proposition 2.1 yields the following characterizations of $\phi(x,y)$ has $NIP$ in $A$:

\begin{Corollary}
(a) The following are equivalent:
\newline
(i) $\phi(x,y)$ has $NIP$ in $A$.
\newline
(ii) For any $(a_{i}:i<\omega)\subseteq A$ there is a subsequence $(a_{j_{i}}:i <\omega)$ such that for any $b\in M^{*}$ there is an eventual truth value of $(\phi(a_{j_{i}}, b): i< \omega)$.
\newline
(b)  Moreover if $A$ is countable, then the following are also equivalent to (i), (ii):
\newline
(iii) Any $p(x)\in S_{\phi}(M^{*})$ which is finitely satisfiable in $A$ is Borel definable over $A$, namely
 $\{q\in S_{\phi^{opp}}(A): \phi(x,b)\in p$ for some/any $b\in M^{*}$ realizing $q$\} is Borel.
\newline
(iv) For any $p(x)\in S_{\phi}(M^{*})$ which is finitely satisfiable in $A$, there is a sequence $(a_{i}:i<\omega)$ from $A$ such that for any $b\in M^{*}$, %
$\phi(p,b)$ is the eventual truth value of $(\phi(a_{i},b):i<\omega)$.
\newline
(v)  The number of $p(x)\in S_{\phi}(M^{*})$ which is finitely satisfiable in $A$ is $< 2^{2^{\omega}}$ (in fact easily seen to be $\leq 2^{\omega}$).

\end{Corollary}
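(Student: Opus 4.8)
The plan is to derive Corollary 2.2 directly from Proposition 2.1 by applying the translation dictionary set up in Section 1.2, so that each model-theoretic condition (i)--(v) corresponds to a topological condition in Proposition 2.1 under the identification of $A$ with a set of continuous $\{0,1\}$-valued functions on $X = S_{\phi^{opp}}(A)$. The first step is to check that the hypotheses of Proposition 2.1 are met: $X$ is compact Hausdorff (being a Stone space), and each $a \in A$ is identified with the function $q \mapsto \phi(a,q)$, which is continuous because $\phi(a,y)$ defines a clopen subset of $X$; since these functions are $\{0,1\}$-valued they are certainly uniformly bounded. So Proposition 2.1 applies with $\alpha$ and $\beta$ any fixed reals with $0 < \alpha < \beta < 1$ (say $\alpha = 1/3$, $\beta = 2/3$), which suffices to separate the values $0$ and $1$.

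The central translation is that of (i). First I would unwind what Proposition 2.1(a)(i) says in this $\{0,1\}$-valued setting: given a sequence $(a_i : i < \omega)$ from $A$, the failure of the condition for every $I \subseteq \N$ means that for each $I$ the set $\{q \in X : \phi(a_n,q) = 0 \text{ for } n \in I \text{ and } \phi(a_n,q) = 1 \text{ for } n \notin I\}$ is nonempty, i.e.\ realized by some $\phi^{opp}$-type, equivalently by some $b \in M^{*}$ with $\phi(a_n, b)$ holding exactly off $I$. Comparing with Definition 1.1 (using Remark 1.2(iii) to pass between $IP$-witnesses and the existence of the $b_I$ in $M^{*}$), this is precisely the assertion that $(a_i)$ is an $IP$-witness --- after swapping the roles of $I$ and its complement. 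Hence the negation of 2.1(a)(i), quantified over all sequences, is exactly ``$\phi$ has $NIP$ in $A$'', giving (i). For the equivalence (i)$\Leftrightarrow$(ii) I then only need to observe that 2.1(a)(ii), relative sequential compactness of $A$ in $\R^X$, translates to: every sequence $(a_i)$ has a subsequence $(a_{j_i})$ converging pointwise on $X$; and pointwise convergence of a $\{0,1\}$-valued sequence at the point $q = tp_{\phi^{opp}}(b/A)$ is exactly the existence of an eventual truth value of $(\phi(a_{j_i}, b) : i < \omega)$, which is the content of (ii).

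For part (b), I would invoke Fact 1.3, which identifies $cl(A) \subseteq 2^X$ with the set of functions $q \mapsto \phi(p,q)$ for $p \in S_\phi(M^*)$ finitely satisfiable in $A$. Under this bijection: condition 2.1(b)(iii) (every $f \in cl(A)$ is Borel) becomes (iii), that every such $p$ is Borel definable over $A$, once one checks that the defining set in (iii) is exactly $f^{-1}(1)$ for the corresponding $f$; condition 2.1(b)(iv) (each such $f$ is a pointwise limit of a sequence from $A$) becomes (iv), since pointwise convergence of $(a_i)$ to the function $\phi(p,-)$ is, evaluated at each $q$, the statement that $\phi(p,b)$ is the eventual truth value of $(\phi(a_i,b))$; and 2.1(b)(v) ($|cl(A)| < 2^{2^\omega}$) becomes (v) via the same bijection, with the sharper bound $\le 2^\omega$ following from the separability/metrizability considerations implicit when $A$ is countable. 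The countability hypothesis on $A$ in (b) matches the countability hypothesis in Proposition 2.1(b) exactly.

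The main obstacle I anticipate is the bookkeeping in the translation of (i): one must be careful about the direction of the inequalities $f_n(x) \le \alpha$ versus $f_n(x) \ge \alpha$ in 2.1(a)(i) and how the index set $I$ there matches the index sets $I, J$ in Definition 1.1(i), since a complementation is involved, and one must confirm via Remark 1.2(iii) that it is legitimate to test the $IP$-witness condition by finding the separating points $b_I$ in the saturated model $M^*$ rather than in $M$ itself. A secondary point requiring care is the claim in (v) that the count is not merely $< 2^{2^\omega}$ but in fact $\le 2^\omega$; here I would note that under $NIP$ in $A$ with $A$ countable, every $p$ finitely satisfiable in $A$ is determined by a Borel (hence coded by a real) defining scheme, or alternatively that $cl(A)$ is a separable metrizable compact space, bounding its cardinality by $2^\omega$. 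The remaining equivalences are then formal consequences of Proposition 2.1 together with Fact 1.3, requiring no further genuine argument.
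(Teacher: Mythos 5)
Your proposal is correct and follows essentially the same route as the paper, which likewise obtains Corollary 2.2 purely by translating Proposition 2.1 through the dictionary of Section 1.2 (identifying $A$ with continuous $\{0,1\}$-valued functions on $X=S_{\phi^{opp}}(A)$ and using the Fact in Section 1.2 to identify $cl(A)$ with the finitely satisfiable types); the paper offers no further argument beyond this translation. One minor caution: $cl(A)$ need not be metrizable (separable Rosenthal compacta such as the split interval are not), so for the parenthetical bound $\le 2^{\omega}$ in (v) you should rely on your other justification, or simply observe via (iv) that every element of $cl(A)$ is the pointwise limit of one of the at most $2^{\omega}$ sequences from the countable set $A$.
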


\subsection{Number of finitely satisfiable global types}

Again we stick with the set up in Section 1.2. We make use of a theorem from \cite{Simon} in addition to Theorem 2F of \cite{BFT} to obtain:

\begin{Proposition} Suppose that $\phi(x,y)$ has $NIP$ in $A$ where $A$ has cardinality at most  $\kappa$. Then the cardinality of the set of $p(x)\in S_{\phi}(M^{*})$ such that $p$ is finitely satisfiable in $M$ is at most $2^{\kappa}$.
\end{Proposition}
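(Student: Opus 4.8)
The plan is to reduce the count to the number of Borel subsets of the type space $X = S_{\phi^{opp}}(A)$, via Borel definability of finitely satisfiable types. First I would record, exactly as in the paragraph preceding Corollary 2.2, that if $p(x) \in S_{\phi}(M^{*})$ is finitely satisfiable in $A$ then the value $\phi(p,b)$ depends only on $q = tp_{\phi^{opp}}(b/A) \in X$; hence such a $p$ is completely determined by the set $D_{p} = \{q \in X : \phi(p,q) = 1\}$, and the assignment $p \mapsto D_{p}$ is injective. Thus it suffices to bound the number of sets $D_{p}$ arising in this way. (For the coheir statement one takes $A$ to be $M$ itself, so that $|A| = |M| \le \kappa$ and finite satisfiability in $M$ coincides with finite satisfiability in $A$.)

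The heart of the argument is to show that every such $D_{p}$ is a Borel subset of $X$. When $A$ is countable this is precisely Corollary 2.2(b)(iii), which came from the angelicity clause of Theorem 4D of \cite{BFT}; but that route is closed off once $A$ is uncountable, since $D_{p}$ can no longer be presented as the limit set of a single sequence drawn from $A$. This is exactly the point at which I would invoke the theorem of \cite{Simon}: feeding it the relative sequential compactness of $A$ supplied by Theorem 2F of \cite{BFT} (part (a) of Proposition 2.1, which imposes no countability hypothesis), one obtains that every $p$ finitely satisfiable in $A$ is Borel definable over $A$, i.e. that $D_{p}$ is Borel. I expect this step to be the main obstacle, as it is where one must genuinely replace the sequential and angelic reasoning valid for countable $A$ by Simon's definability machinery.

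It then remains to carry out the cardinal arithmetic. The space $X = S_{\phi^{opp}}(A)$ carries the topology generated by the clopen sets $\{q : \phi(a,y) \in q\}$ for $a \in A$, so it has a subbasis, and hence a weight, of size at most $|A| \le \kappa$. The Borel $\sigma$-algebra of a space generated by at most $\kappa$ sets, built up through the usual $\omega_{1}$ levels of countable unions and complements, has cardinality at most $\kappa^{\aleph_{0}} \le 2^{\kappa}$. Combining the three steps, the number of $p(x) \in S_{\phi}(M^{*})$ finitely satisfiable in $A$ is at most the number of Borel subsets of $X$, hence at most $2^{\kappa}$, which is the desired bound.
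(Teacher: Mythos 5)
Your reduction of the problem to counting the sets $D_{p}\subseteq X$ (this is Fact 1.4 of the paper) and your closing cardinal arithmetic are fine, but the step you yourself flag as the main obstacle is a genuine gap, and it is exactly where your route parts company with what Simon's theorem actually delivers. Theorem 2.3 of \cite{Simon} does \emph{not} conclude that an element $h$ of the closure of $A$ in $2^{X}$ is Borel. Its conclusion is that $h$ lies in a generalized Baire class $B_{1}(X)$: $h$ is the limit, along the filter $\mathfrak{F}_{\kappa}$ on $P_{<\omega}(\kappa)$ generated by the tails $T_{j}=\{i: i\supseteq j\}$, of a net $(f_{i}: i\in P_{<\omega}(\kappa))$ of continuous $\{0,1\}$-valued functions. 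For such an $h$ one only gets $h^{-1}(1)=\bigcup_{j}\bigcap_{i\supseteq j}f_{i}^{-1}(1)$, a union of $\kappa$ many closed sets, which for uncountable $\kappa$ need not be Borel in the $\sigma$-algebra sense. Borel definability over $A$ is precisely the statement that is only available for countable $A$ (Corollary 2.2(b)(iii), coming from the angelicity/sequential-limit clause of Theorem 4D of \cite{BFT}); nothing in \cite{Simon} upgrades it to uncountable $A$, and note that ``$\phi$ has $NIP$ in $A$'' does not even give a uniform bound on alternation on $A$, so the standard strong Borel definability argument for $NIP$ formulas in a theory is also unavailable here. Consequently your final count, the number of Borel subsets of $X$, is a count of the wrong collection of sets.

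The paper's actual proof avoids Borelness entirely. It uses Theorem 2F(iii) of \cite{BFT} on closed (hence Cech-complete) subspaces $Y$ of $X$ to show that every $h\in cl(A)$ satisfies $h|Y\in B^{\S}(Y)$ for all closed $Y$, i.e. $h\in B^{\S}_{r}(X)$; Simon's Theorem 2.3 then places $h$ in $B_{1}(X)$ as described above, and the bound $|cl(A)|\leq 2^{\kappa}$ comes directly from the observation that each $h\in B_{1}(X)$ is determined by a $P_{<\omega}(\kappa)$-indexed family of continuous $\{0,1\}$-valued functions on $X$, of which there are at most $\kappa^{\kappa}=2^{\kappa}$ families. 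If you replace ``Borel'' throughout your middle step by ``determined by a $P_{<\omega}(\kappa)$-indexed net of continuous functions converging along $\mathfrak{F}_{\kappa}$,'' your argument becomes the paper's; as written, it asserts something stronger than is known.
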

\noindent
When $A$ is countable, this is given by Corollary 2.2,  (i) implies (v).

\vspace{2mm}
\noindent
Before giving the proof let us remark:
\begin{Remark} One can not expect a converse to Proposition 2.3. But note that we have the  partial converse: Suppose $A$ is a set of $l(x)$-tuples from $M^{*}$ and  $(a_{\alpha}:\alpha<\kappa)\subseteq A$ is an $IP$ witness for  $\phi(x,y)$ (see Definition 1.1(i)).   Then the cardinality of the set of $p(x)\in S_{\phi}(M^{*})$ which is finitely satisfiable in $A$ is at least $2^{2^{\kappa}}$.
\end{Remark}
\noindent
{\em Explanation of 2.4.}  This is the usual proof of many coheirs for formulas with the independence property. Namely for each subset $I$ of $\kappa$ let $b_{I}\in M^{*}$ be such that  $M^{*}\models \phi(a_{\alpha}, b_{I})$ iff $\alpha\in I$.
For  each ultrafilter $U$ on $\kappa$, let $\Sigma_{U}(x)$ be the collection of formulas $\{\phi(x,b_{I});I\in U\}\cup\{\neg\phi(x,b_{I}:I\notin U\}$.
Each $\Sigma_{U}(x)$ is finitely satisfiable in $A$, so  extends to some $p_{U}(x)\in S_{\phi}(M^{*})$ finitely satisfiable in $A$.  The $p_{U}$ are clearly mutually contradictory and there are $2^{2^{\kappa}}$ of them.

\vspace{5mm}
\noindent
{\em Proof of Proposition 2.3.}   The proof involves some additional ingredients which we explain along the way.
We are working again in the model theory context of $\phi(x,y)$, $A$, $M^{*}$, $X = S_{\phi^{opp}}(A)$, where we identify $A$ with the $A$ the collection of continuous functions from $X$ to $\{0,1\}$ given by formulas $\phi(a,y)$ for $a$ in $M$.
Following Definition 2.1 of \cite{Simon}, $B^{\S}(X)$ is the set of $\{0,1\}$-valued functions $f$ on $X$ such that ${\overline{f^{-1}(0)}}\cap {\overline{f^{-1}(1)}}$ has empty interior, and $B^{\S}_{r}(X)$ is the set of $\{0,1\}$-valued functions $f$ on $X$ such that $f|Y \in B^{\S}(Y)$ for every nonempty closed subset $Y$ of $X$.
Now Theorem 2F (vi) of \cite{BFT}  says that $\phi(x,y)$ has $NIP$ in $M$.  And  part (iii) of Theorem 2F says that for every ``Cech-complete" subset $Y$ of $X$, $\{f|Y: f\in A\}$ is relatively compact in $B^{\S}(Y)$. (We are here using the fact that $A$ consists of $\{0,1\}$-valued functions as well as the compatibility of Definition 1A of \cite{BFT} with Definition 2.1 of \cite{Simon} that we have above.) As every closed subset $Y$ of $X$ is Cech-complete (see 2A of \cite{BFT}),  we conclude that for every closed $Y\subseteq X$, $\{f|Y: f\in A\}$ is relatively compact in $B^{\S}(Y)$.  Now suppose $h\in cl(A)$ (in the space $2^{X}$).  So $h\in B^{\S}(X)$.  As we are working with the pointwise convergence topology, $h|Y \in cl(\{f|Y:f\in A\}$ for every closed $Y\subseteq X$.  But then $h|Y\in B^{\S}(Y)$. Hence by definition,
\newline
(*)  $h\in B^{\S}_{r}(X)$.

We now want to apply Theorem 2.3 of \cite{Simon}.  The assumption  that $X$ is a compact Hausdorff  $0$-dimensional space with a basis of size at most $\kappa$ is satisfied.
Let $P_{<\omega}(\kappa)$ be the set of finite subsets of $\kappa$ and ${\frak F}_{\kappa}$ the filter on $P_{<\omega}(\kappa)$ generated by sets $T_{j} = \{i\in P_{<\omega}(\kappa): i\supseteq j\}$.   $B_{1}(X)$ is then defined to be the set of functions $h:X\to \{0,1\}$ such that that there is a family $(f_{i}:i\in P_{<\omega}(\kappa))$ of continuous functions from $X$ to $\{0,1\}$ such that for every neighbourhood $U$ of $h$, $\{i\in P_{<\omega}(\kappa): f_{i}\in U\} \in {\frak F}_{\kappa}$.  Note that $h$ is determined by the family
$\{f_{i}:i\in P_{<\omega}(\kappa)\}$.    As the collection of continuous functions from $X$ to $\{0,1\}$ is of cardinality at most $\kappa$ it follows that the cardinality of $B_{1}(X)$ has cardinality at most $2^{\kappa}$.

Now  Theorem 2.3 of \cite{Simon} concludes from (*) that $h\in B_{1}(X)$.  So putting everything together we see that $|cl(A)| \leq 2^{\kappa}$.  By Fact 1.4, there are at most $2^{\kappa}$ $p(x)\in S_{\phi}(M^{*})$ which are finitely satisfiable in $A$.

\begin{Question} Under the assumptions of Proposition 2.3, can we say anything about the cardinality of the set of $p(x)\in S_{\phi}(M^{*})$ which are $Aut(M^{*}/A)$-invariant?

\end{Question}

\subsection{$NOP$ in a model}

In \cite{Pillay-Grothendieck} as well as implicitly in earler papers (e.g. \cite{Pillay-dimension}), we defined``$\phi(x,y)$ has not the order property ($NOP$) in a model $M$" to mean that there do not exist $a_{i}, b_{i}\in M$ for  $i<\omega$ such that $M\models \phi(a_{i},b_{j})$ iff $i\leq j$ for all $i,j$, or $M\models \phi(a_{i},b_{j})$ iff $i\geq j$ for all $i,j$.  This definition is not, on the face of it, the right analogue of ``$\phi(x,y)$ has $NIP$ in $M$".   However we point out quickly that it is {\em equivalent} to the right analogue.

\begin{Lemma} Let $\phi(x,y)$ be an $L$-formula, $M$ an $L$-structure, and $M^{*}$ a saturated elementary extension of $M$. Then the following are equivalent:
\newline
(i) there exist $a_{i}, b_{i}\in M$ for $i<\omega$ such that either $M\models \phi(a_{i},b_{j})$ iff $i\leq j$ for all $i,j$ or $M\models \phi(a_{i},b_{j})$ iff $i\geq j$ for all $i,j$.
\newline
(ii)  there exist $a_{i}\in M$ for $i<\omega$ and $b_{i}\in M^{*}$ for $i<\omega$ such that,  either $M^{*}\models \phi(a_{i},b_{j})$ iff $i\leq j$ for all $i,j$, or $M\models\phi(a_{i},b_{j})$ iff $i\geq j$ for all $i,j$.
\end{Lemma}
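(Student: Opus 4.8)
The plan is to prove the two implications separately, with only (ii)$\Rightarrow$(i) requiring any work. For (i)$\Rightarrow$(ii) there is nothing to do beyond bookkeeping: since $M\preceq M^{*}$ and the witnessing $b_{i}$ already lie in $M\subseteq M^{*}$, each displayed half-graph relation transfers verbatim from $M$ to $M^{*}$, and we simply carry whichever orientation was chosen.

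For (ii)$\Rightarrow$(i) I would first reduce to the orientation $\phi(a_{i},b_{j})\Leftrightarrow i\le j$, the other case being symmetric (replacing $\phi$ by its negation and/or reversing the indexing). Write $A=\{a_{i}:i<\omega\}\subseteq M$. The first ingredient is a reflection principle coming from the fact that the rows, unlike the columns, already lie in $M$: for each $n$ the statement that there exist columns $y_{0},\dots,y_{n-1}$ with $\phi(a_{i},y_{j})\leftrightarrow i\le j$ for $i,j<n$ is an existential sentence whose \emph{parameters} $a_{0},\dots,a_{n-1}$ are in $M$, and it holds in $M^{*}$ (witnessed by $b_{0},\dots,b_{n-1}$), hence holds in $M$ by elementarity. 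Thus for every $n$ there are $b_{0}',\dots,b_{n-1}'\in M$ realizing the $n\times n$ half-graph on the fixed rows $a_{0},\dots,a_{n-1}$, and similarly every single ``column split'' of $A$ at a finite level is realized in $M$. The entire difficulty is then to upgrade these arbitrarily long finite half-graphs in $M$ to one honest $\omega\times\omega$ half-graph inside $M$.

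The upgrade is where I would focus. I would apply Ramsey's theorem directly to $(a_{i})$ \emph{inside} $M$, colouring an ordered pair $(a_{i},a_{j})$ with $i<j$ by the pair of truth values $\bigl(\phi(a_{i},a_{j}),\phi(a_{j},a_{i})\bigr)$; this yields an infinite subsequence $(a_{i}')\subseteq M$ on which $\phi$ is homogeneous on ordered pairs. If on this subsequence $\phi$ \emph{separates} the orders $i<j$ and $i>j$, then $(a_{i}')$ is internally ordered by $\phi$ and we may take the columns to be the rows themselves (with a one-step index shift to pass between $\le$ and $\ge$); this produces the desired half-graph entirely within $M$, covering for instance all linearly ordered situations.

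The hard part will be the complementary case, where no subsequence is internally ordered by $\phi$ (think of a purely bipartite configuration, in which $\phi(a_{i}',a_{j}')$ is constant because rows and columns lie in different sorts). Here the columns must genuinely be external, and each column $b_{j}'$ must separate a finite initial segment of rows from an \emph{infinite} tail --- an $\omega$-ary condition that a possibly non-saturated $M$ need not satisfy merely because it is finitely satisfiable. To break this obstacle I would pass, via Ramsey together with compactness, to an indiscernible sequence extracted from $(a_{i})$, whose Ehrenfeucht--Mostowski type is realised by increasing tuples from $M$; indiscernibility collapses the infinitely many column-conditions to the finitely many distinct ones dictated by the EM-type, so that the column type over a suitably thinned subsequence becomes finitely determined and hence realisable in $M$ by the reflection principle above. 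Alternatively, and more in the spirit of the present paper, I would read the order property as the failure of relative compactness of $A=\{\phi(a_{i},\cdot)\}$ in the sense of Proposition 2.1, and use the angelicity of the ambient function space --- which forces such failures to be witnessed by sequences and their cluster points --- to place the witnessing columns inside $M$. Either route reduces the genuinely infinitary step to the finitary reflection already established, and I expect the indiscernibility reduction to be the cleanest to write out in full.
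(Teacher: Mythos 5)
Your reduction to the orientation $i\le j$ and your observation that each finite $n\times n$ half-graph on the fixed rows $a_0,\dots,a_{n-1}$ reflects down into $M$ by elementarity are both correct, and you have correctly located the entire difficulty in the ``upgrade'' from arbitrarily large finite half-graphs in $M$ to an infinite one. But that step is exactly where your argument has a genuine gap. The type of a column $b_j$ over the rows is $\{\phi(a_i,y):i\le j\}\cup\{\neg\phi(a_i,y):i>j\}$; it must separate a finite initial segment from an \emph{infinite} tail, so it is an infinite type that is finitely satisfiable in $M$ but need not be realised there, and no diagonalisation over the finite witnesses $b_j^n$ controls $\phi(a_i,b_j^n)$ for $i\ge n$. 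Passing to an indiscernible sequence does not repair this: an indiscernible sequence extracted from $(a_i)$ is obtained by realising an EM-type via compactness and in general does not lie in $M$, and even over an indiscernible sequence the column type above is not isolated by any finite fragment --- if it were, you would essentially already possess the definable-eventual-truth-value statement that is the real content of the lemma. Your alternative suggestion points at the wrong theorem: angelicity (Proposition 2.1, \cite{BFT}) belongs to the $NIP$/Rosenthal circle of ideas, whereas the order property is governed by Grothendieck's double-limit criterion for relative compactness in $C(X)$.

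The paper instead proves the contrapositive: assuming (i) fails, i.e.\ $\phi$ has $NOP$ in $M$, it invokes Proposition 2.3 of \cite{Pillay-Grothendieck}, which produces a subsequence $(a_{j_i})$ and a finite Boolean combination $\psi(y)$ of instances $\phi(a,y)$ with $a\in M$ computing the eventual truth value of $(\phi(a_{j_i},b))_i$ for every $b\in M^{*}$. For each fixed $k$ the value $\phi(a_{j_i},b_k)$ is eventually false, so $M^{*}\models\neg\psi(b_k)$ for all $k$; by compactness one finds $b\in M^{*}$ with $\neg\psi(b)$ but $\phi(a_{j_i},b)$ for all $i$, contradicting the defining property of $\psi$. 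To keep your direct approach you would have to reprove this Grothendieck-type definability statement; as written, the key step is asserted rather than established.
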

\begin{proof}
Clearly (i) implies (ii).
 Suppose now that (i) fails, namely that $\phi(x,y)$ does not have the order property in $M$.
Suppose, for a contradiction that (ii) holds. Without loss of generality, we have $a_{i}\in M$ and $b_{i}\in M^{*}$ such that $M\models \phi(a_{i},b_{j})$ iff $i\leq j$.  Now by Proposition 2.3 of \cite{Pillay-Grothendieck} and our assumption that (i) fails, there is a subsequence $(a_{j_{i}}:i<\omega)$ of $(a_{i}:i<\omega)$ and a finite Boolean combination$\psi(y)$ of instances $\phi(a,y)$ of $\phi(x,y)$ for $a\in M$,  such that for all $b\in M^{*}$, the truth  value of $\psi(b)$ equals the eventual truth value of $\phi(a_{j_{i}},b)$.  But note that $M^{*}\models \neg\psi(b_{i})$ for each $i<\omega$. So by compactness we can find $b\in M^{*}$ such that $M^{*}\models \neg\psi(b)$ but $M^{*}\models \phi(a_{j_{i}},b)$ for all $i<\omega$, a contradiction.

\end{proof}

\begin{Corollary} If $\phi(x,y)$ has $NOP$ in $M$ (i.e. negation of condition (i) in Lemma 2.6) then $\phi(x,y)$ has $NIP$ in $M$ in the sense of Definition 1.1.

\end{Corollary}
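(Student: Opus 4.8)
The plan is to prove the contrapositive: assuming $\phi(x,y)$ has $IP$ in $M$ (i.e. it does \emph{not} have $NIP$ in $M$), I would derive that condition (i) of Lemma 2.6 holds, namely that $\phi$ has the order property in $M$, contradicting $NOP$ in $M$. The crucial observation is that Lemma 2.6 reduces the task to verifying condition (ii), which requires only that the $a_i$ lie in $M$ while permitting the $b_j$ to live in the saturated extension $M^*$. This weakening is exactly what makes the argument routine, since the relevant witnesses in $M^*$ are furnished directly by saturation.

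Concretely, suppose $\phi$ has $IP$ in $M$, so by Definition 1.1 there is an $IP$-witness $(a_i : i<\omega)$ with all $a_i \in M$. By Remark 1.2(iii) there are elements $b_I \in M^*$, one for each $I \subseteq \omega$, with $M^* \models \phi(a_\alpha, b_I)$ iff $\alpha \in I$. I would then set $b_j := b_{\{0,1,\dots,j\}}$ for each $j<\omega$; this gives $M^* \models \phi(a_i, b_j)$ iff $i \in \{0,\dots,j\}$, i.e. iff $i \leq j$. Since the $a_i$ are in $M$ and the $b_j$ are in $M^*$, this is precisely the first alternative of condition (ii) of Lemma 2.6. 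Applying the implication (ii) $\Rightarrow$ (i) of that lemma yields condition (i), so $\phi$ has the order property in $M$, contradicting the hypothesis of $NOP$ in $M$.

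The only step that carries any real content is the appeal to Lemma 2.6. If one insisted on producing the order configuration with both $a_i$ and $b_j$ inside $M$ --- the literal shape of condition (i) --- one would be reproving the classical fact that $IP$ implies $OP$ \emph{within a fixed model}, which cannot be handled by a naive compactness argument (that would only place the $b_j$ in $M^*$). Lemma 2.6 is exactly the device that transfers the $M^*$-configuration back to a genuine order-property configuration in $M$, so once it is invoked the remainder is bookkeeping. I therefore expect no obstacle beyond correctly matching the initial-segment pattern $\{0,\dots,j\}$ to the inequality $i \leq j$ demanded by Lemma 2.6(ii).
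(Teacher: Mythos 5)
Your proof is correct and is exactly the intended derivation: the paper leaves the Corollary without explicit proof precisely because it follows from Lemma 2.6 in the way you describe, by converting an $IP$-witness in $M$ into the configuration of Lemma 2.6(ii) via Remark 1.2(iii) and the choice $b_{j}=b_{\{0,\dots,j\}}$, then invoking (ii)$\Rightarrow$(i). No issues.
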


\end{document}